\newtheorem{theorem}{Theorem}[section]
\newtheorem{proposition}[theorem]{Proposition}
\newtheorem{corollary}[theorem]{Corollary}
\theoremstyle{definition}
\newtheorem{example}[theorem]{Example}
\newtheorem{definition}[theorem]{Definition}
\newtheorem{remark}[theorem]{Remark}
\def\Z{{\mathbb Z}}
\def\0{{\mathbf 0}}
\def\1{{\mathbf 1}}
\newcommand{\id}{\operatorname{id}}
 \def\Z{{\mathbb Z}}
\begin{document}
\title{On biquandles for the groups $G_n^k$ and surface singular braid monoid}

\author{
\smallskip\\
{\small SANG YOUL LEE}
\smallskip\\
{\small\it
Department of Mathematics, Pusan National University,
}\\
{\small\it Busan 46241, Korea}\\
{\small\it sangyoul@pusan.ac.kr}
\smallskip\\
{\small VASSILY OLEGOVICH MANTUROV}
\smallskip\\
{\small\it MIPT, Moscow,}\\
{\small\it Kazan Federal University,}\\
{\small\it North-Eastern University (China),}\\
{\small\it vomanturov@yandex.ru}
\smallskip\\
and
\smallskip\\
{\small IGOR MIKHAILOVICH NIKONOV}
\smallskip\\
{\small\it
Department of Mechanics and Mathematics, Moscow State University,
}\\
{\small\it Moscow, Russia.}\\
{\small\it nikonov@mech.math.msu.ru}}

\date{}

\maketitle

\begin{abstract}
The groups $G_n^k$ were defined by V. O.~Manturov~\cite{MN} in order to describe dynamical systems in configuration systems. In the paper we consider two applications of this theory: we define a biquandle structure on the groups $G_n^k$, and construct a homomorphism from the surface singular braid monoid to the group $G_n^2$.
\end{abstract}

\section{Introduction}

In 2015, V. O.~Manturov~\cite{MN} defined a series of groups $G_{n}^{k}$ for natural numbers $n>k\geq 1$ and formulated the following principle:

\begin{center}
{\em If dynamical systems describing the motion of $n$ particles possess a nice codimension one property depending exactly on $k$ particles, then these dynamical systems admit a topological invariant valued in $G_{n}^{k}$}.
\end{center}

These turn out to be good in studying fundamental groups of configuration and moduli spaces when the initial space admits some geometry (for example, in the Euclidean space one can study codimension 1 properties ``three points are collinear'' or ``four points belong to the same circle/line''). One of our goals is to study when we have just topological spaces.

\begin{definition}\label{def:gnk}
The groups of {\em free $k$-braids} $G_{n}^{k}$ ($n>k\geq 1$) are defined as groups with the set of generators $a_{m}$ which are indexed by all $k$-element subsets of $\{1,\ldots, n\}$,  and relations
 \begin{enumerate}
\item[(1)]
$(a_{m})^{2}=1$ for any unordered sets $m \subset \{1,\ldots, n\}, Card(m)=k$;
\item[(2)] (far commutativity) $a_{m}a_{m'} = a_{m'}a_{m},$ if $Card(m\cap m') < k-1$;
\item[(3)] (tetrahedron relation) for every set $U=\{i_1,\dots,i_{k+1}\} \subset \{1,\dots, n\}$ of cardinality $(k+1)$, let us denote them by $m^{p}=U\setminus\{i_p\}, p=1,\dots,k+1$. Then
$$a_{m^{1}}\cdots  a_{m^{k+1}} = a_{m^{k+1}}\cdots  a_{m^{1}}.$$
\end{enumerate}
\end{definition}

We shall call the words in generators $a_m\in G_n^k$ {\em diagrams of free $k$-braids}. These groups have become a sophisticated tool for researches in knot theory and low dimensional topology~\cite{FKMN}.

\begin{example} \label{examp-g2n}
Let $G^2_n$ be the group defined by the presentation generated by
$\{a_{ij}~|~\{i,j\}\subset\{1,\ldots,n\}, i < j\}$ subject to the following relations:
\begin{itemize}
\item[(1)] $a^2_{ij} = 1$ for all $i \not= j,$
\item[(2)] $a_{ij}a_{kl} = a_{kl}a_{ij}$ for distinct $i, j, k, l,$
\item[(3)] $a_{ij}a_{ik}a_{jk} = a_{jk}a_{ik}a_{ij}$ for distinct $i, j, k.$
\end{itemize}

In particular, the group $G^2_3$ is isomorphic to the group with a presentation $$<a, b, c~|~a^2 = b^2 = (abc)^2 = 1>,$$ where $a = a_{12}, b = a_{13}, c = a_{23}.$ Indeed, the relation $(abc)^2 = 1$ is equivalent to the relation $abc = cab = 1$ because of $a^2 = b^2 = c^2 = 1.$ This obviously yields all the other tetrahedron relations.
\end{example}

\begin{example} \label{examp-g3n}
Let $G^3_n$ be the group given by the presentation generated by
$\{a_{ijk}~|~\{i, j, k\} \subset \{1,\ldots,n\}, |\{i, j, k\}| = 3\}$ subject to the following relations:
\begin{itemize}
\item[(1)] $a_{ijk} = 1$ for all $\{i, j, k\} \subset \{1,\ldots,n\},$
\item[(2)] $a_{ijk}a_{stu} = a_{stu}a_{ijk}$ if $\{i, j, k\}\cap \{s, t, u\} < 2.$
\item[(3)] $a_{ijk}a_{ijl}a_{ikl}a_{jkl} = a_{jkl}a_{ikl}a_{ijl}a_{ijk}$ for distinct $i, j, k, l.$
\end{itemize}

Especially, the group $G^3_4$ is isomorphic to the group with a presentation $$<a, b, c, d ~|~ a^2 = b^2 = c^2 = 1, (abcd)^2 = 1, (acdb)^2 = 1, (adbc)^2 = 1>.$$ Here $a = a_{123}, b = a_{124}, c = a_{134},$ and $d = a_{234}.$
\end{example}

We can give a geometric interpretation to the free $k$-braids in the spirit of Artin's braids: Each free $k$-braid in $G_n^k$ given by a word $w=a_{m_1}\dots a_{m_l}, m_j=\{i_{j1},i_{j2},\dots,i_{jk}\}, j=1,\dots,l$ determines an oriented graph $\Gamma_w$ with $n$ source vertices and $n$ sink vertices of degree $1$, and $l$ vertices of degree $2k$ (with $k$ incoming and $k$ outcoming) edges, see Fig.~\ref{fig:aijk}. The vertices possesses the structure of opposite edges: Each incoming edge is assigned to a unique outcoming edge (the opposite edge).

\begin{figure}
\centering\includegraphics[width=0.2\textwidth]{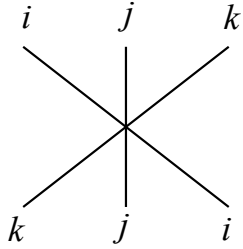}
\caption{A vertex corresponding to the generator $a_{ijk}$}\label{fig:aijk}
\end{figure}

The relations in the group $G_n^k$ induce transformations (moves) of the free $k$-braid graphs, see Figs.~\ref{fig:involution_relation}--\ref{fig:far_commutativity_relation} for the case $k=3$.

\begin{figure}
\centering\includegraphics[width=0.4\textwidth]{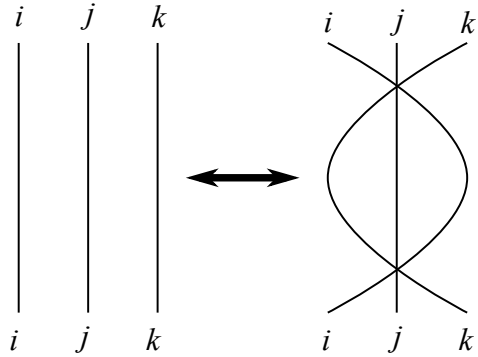}
\caption{The involution relation $a_{ijk}^2=1$}\label{fig:involution_relation}
\end{figure}

\begin{figure}
\centering\includegraphics[width=0.4\textwidth]{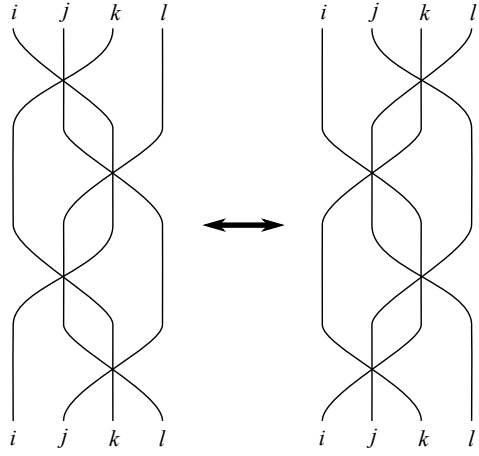}
\caption{The tetrahedron relation $a_{ijk}a_{ijl}a_{ikl}a_{jkl}=a_{jkl}a_{ikl}a_{ijl}a_{ijk}$}\label{fig:tetrahedron_relation}
\end{figure}

\begin{figure}
\centering\includegraphics[width=0.4\textwidth]{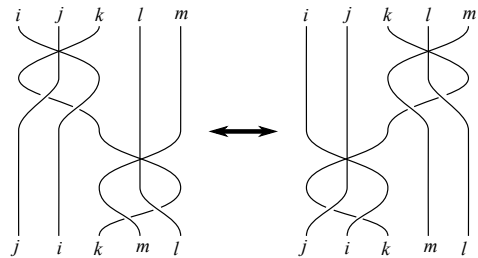}
\caption{The far commutativity relation $a_{ijk}a_{klm}=a_{klm}a_{ijk}$}\label{fig:far_commutativity_relation}
\end{figure}

\section{Biquandles for \texorpdfstring{$G_n^k$}{Gnk}}

Biquandles were introduced in~\cite{FRS}. They can be described as rules for colouring edges of link diagrams by elements of some set~\cite{EN}. These rules ensure that the sets of proper colourings of two equivalent link diagrams are isomorphic. We can take this approach to give a definition of biquandles on free $k$-braids.

\begin{definition}\label{def:biquandle_gnk}
A {\em biquandle for free $k$-braids} (or a {\em $k$-biquandle}) is a pair $(X,B)$ where $X$ is some set and $B\colon X^{\times k}\to X^{\times k}$ is a map that satisfies the relations:
\begin{enumerate}
\item $\pi\circ B=B\circ\pi$ for any permutation $\pi\in\Sigma_k$. Here we consider the natural action of $\Sigma_k$ on $X^{\times k}$ by permutation of the factors;
\item $B^2=\mathop{id}$;
\item $B_{\{1,\dots,k\}}B_{\{p+1,\dots,k+p\}}=B_{\{p+1,\dots,k+p\}}B_{\{1,\dots,k\}}$ for any $p=2,\dots,k-1$. Here $$B_{\{i_1,\dots,i_k\}}\colon X^{\times 2k-1}\to X^{\times 2k-1}$$ denotes the map which act as $B$ on the factors with indices from the set $\{i_1,\dots,i_k\}$ and as the identity on the others;
\item $B_{\hat 1}B_{\hat 2}\cdots B_{\widehat{k+1}}=B_{\widehat{k+1}}B_{\hat k}\cdots B_{\hat 1}$ where $B_{\hat i}=B_{\{1,\dots,k+1\}\setminus\{i\}}\colon X^{\times k+1}\to X^{\times k+1}$.
\end{enumerate}
\end{definition}

The definition means that $B$ induces an action $\rho$ of the group $G_n^k$ on the set $X^{\times n}$ with $\rho(a_m)=B_m$.

\begin{definition}\label{def:colouring_gnk}
Let $(X,B)$ be a biquandle on free $k$-braids and $w$ be a diagram of free $k$-braid (i.e. a word in generators $a_m$). Let $\Gamma_w$ be the geometric realization of $w$, $E=E(\Gamma_w)$ be its set of edges, and $c\colon E\to X$ be a colouring of the edges with elements of $X$. We call the colouring $c$ to be {\em good} if for each vertex of degree $2k$ the colours $(x_1,\dots, x_k)$ of the incoming edges are related to the colours $(y_1,\dots,y_k)$ of the opposite outcoming edges by the rule (see Fig.~\ref{fig:biquandle_operator})

\begin{equation}\label{eq:coloring_rule}
(y_1,\dots,y_k)=B(x_1,\dots, x_k)\equiv(b_1(x_1,\dots, x_k),\dots,b_k(x_1,\dots, x_k)).
\end{equation}

\begin{figure}
\centering\includegraphics[width=0.3\textwidth]{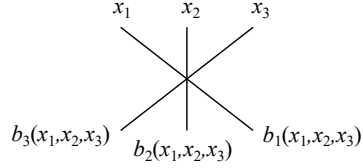}
\caption{Colouring rule of the biquandle}\label{fig:biquandle_operator}
\end{figure}
We denote the set of good colourings by $Col_{(X,B)}(w)$. Given two $n$-tuples of colours $\chi_1,\chi_2\in X^{\times n}$, we define the {\em coloring binding number} $col_{(X,B)}^{\chi_1,\chi_2}(w)$ as the number of good colourings of $w$ which have the colours $\chi_1$ on the incoming edges of the diagram $\Gamma_w$ and the colours $\chi_2$ on the outcoming edges of $\Gamma_w$.
\end{definition}
Note that $col_{(X,B)}^{\chi_1,\chi_2}(w)=0$ or $1$ because the colours of the incoming edges determine uniquely the colours of the other edges of $\Gamma_w$.

The following statement comes immediately from Definitions~\ref{def:gnk},\ref{def:biquandle_gnk} and~\ref{def:colouring_gnk}.
\begin{theorem}\label{thm:biquandle_gnk_colouring}
Let $w$ and $w'$ be diagrams of free $k$-braids. If $w$ and $w'$ define the same element in $G_n^k$ then for any biquandle for free $k$-braids $(X,B)$ there is a bijection between the sets of good colourings of $w$ and of $w'$ with elements of $X$.
\end{theorem}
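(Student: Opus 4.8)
The whole argument rests on the remark made right after Definition~\ref{def:biquandle_gnk}: the assignment $a_m\mapsto B_m$ extends to a homomorphism $\rho\colon G_n^k\to\operatorname{Bij}(X^{\times n})$ into the group of bijections of $X^{\times n}$. The plan is first to justify this remark in detail and then to read off the theorem from it. To build $\rho$ I would send each generator $a_m$ (indexed by an unordered $k$-subset $m\subset\{1,\dots,n\}$) to the map $B_m\colon X^{\times n}\to X^{\times n}$ acting as $B$ on the factors indexed by $m$ and as the identity elsewhere. Axiom~(1), the $\Sigma_k$-equivariance $\pi\circ B=B\circ\pi$, is exactly what guarantees that $B_m$ depends only on the set $m$ and not on any ordering of its elements, so $B_m$ is well defined. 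Axiom~(2) gives $B_m^2=\id$, so each $B_m$ is an involution, hence a bijection, matching relation~(1), $a_m^2=1$. I would then check that the two remaining families of relations are respected: if $m\cap m'=\varnothing$ the operators $B_m,B_{m'}$ act on disjoint factors and commute for free, while the cases $1\le|m\cap m'|\le k-2$ are precisely the content of axiom~(3) after relabeling the indices by a permutation (using axiom~(1) again); together these give far commutativity, relation~(2). Finally axiom~(4) is verbatim the tetrahedron relation~(3) once one writes $m^p=U\setminus\{i_p\}$. Since these relations generate all relators of $G_n^k$, the map $\rho$ is a well-defined homomorphism.

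Next I would describe the good colourings concretely. Realizing $w=a_{m_1}\cdots a_{m_l}$ as $\Gamma_w$ and fixing an arbitrary colouring $\chi_1\in X^{\times n}$ of the $n$ source edges, the rule~\eqref{eq:coloring_rule} forces the colours of every subsequent edge: at the vertex carrying $a_{m_j}$ the operator $B$ transforms the colours on the strands of $m_j$ and leaves the rest unchanged. Applying these operators in the order dictated by the diagram, the colours of the sink edges come out to be $\rho(w)(\chi_1)$. Hence assigning to a good colouring its source colours gives a bijection $Col_{(X,B)}(w)\cong X^{\times n}$ (this is the content of the ``Note'' following Definition~\ref{def:colouring_gnk}), and $col_{(X,B)}^{\chi_1,\chi_2}(w)=1$ precisely when $\chi_2=\rho(w)(\chi_1)$ and is $0$ otherwise.

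To finish, suppose $w$ and $w'$ represent the same element $g\in G_n^k$. Because $\rho$ is a homomorphism, $\rho(w)=\rho(g)=\rho(w')$ as bijections of $X^{\times n}$. I would then define $\Phi\colon Col_{(X,B)}(w)\to Col_{(X,B)}(w')$ by sending the good colouring with source data $\chi_1$ to the unique good colouring of $w'$ with the same source data $\chi_1$. Under the identifications of the previous paragraph $\Phi$ is simply the identity of $X^{\times n}$, hence a bijection; moreover it preserves the sink colours since $\rho(w)=\rho(w')$, so in fact $col_{(X,B)}^{\chi_1,\chi_2}(w)=col_{(X,B)}^{\chi_1,\chi_2}(w')$ for all boundary data $\chi_1,\chi_2$, which is the strongest form of the statement.

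I expect the only genuinely substantive step to be the well-definedness of $\rho$ in the first paragraph --- in particular, reducing the general far-commutativity relation to the single model identity of axiom~(3) and keeping track of the $\Sigma_k$-symmetry so that $B_m$ is insensitive to the labeling of $m$. Everything after the construction of $\rho$ is bookkeeping: each good colouring is rigidly determined by its source colours, and equal group elements act identically on $X^{\times n}$, so the bijection is immediate.
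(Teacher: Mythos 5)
Your proposal is correct and follows exactly the route the paper intends: the paper proves this theorem only by the remark after Definition~\ref{def:biquandle_gnk} that $B$ induces an action $\rho$ of $G_n^k$ on $X^{\times n}$ via $a_m\mapsto B_m$, and then declares the theorem to come ``immediately from the definitions.'' You have simply written out in full the verification of the relations and the identification of good colourings with source data that the paper leaves implicit, so there is nothing to correct or compare.
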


Thus, the coloring binding numbers $col_{(X,B)}^{\chi_1,\chi_2}$ are invariants of free $k$-braids.

Now we give some examples of biquandles for free $k$-braids.

\begin{example}[Fundamental $k$-biquandle]
Let $w$ be a diagram of free $k$-braid and $\Gamma_w$ be its geometric realization. Analogously to the fundamental biquandle construction, we can define the {\em fundamental $k$-biquandle} $BQ_f(w)=(X_f(w),B_f(w))$ for the diagram $w$. The set $X_f(w)$ consists of equialence classes of formal words which are results of all possible applications of a formal map $B_f(w)$ to the elements of the set of edges $E(\Gamma_w)$. The equivalence of the words is induced by the relations of Definition~\ref{def:biquandle_gnk} and the equation~\ref{eq:coloring_rule}.

The fundamental $k$-biquandle can be characterized by the following statement.

\begin{theorem}
\begin{enumerate}
\item Let $w$ be a diagram of free $k$-braid. Then for any $k$-biquandle $(X,B)$ there is a natural bijection between the set of good colouring of $w$ with elements of $X$ and the set of $k$-biquandle homomorphisms from the fundamental $k$-biquandle $BQ_f(w)$ to the $k$-biquandle $(X,B)$.
\item If two diagrams $w$ and $w'$ of free $k$-braids define the same element of $G_n^k$ then their fundamental $k$-biquandles $BQ_f(w)$ and $BQ_f(w')$ are isomorphic.
\end{enumerate}
\end{theorem}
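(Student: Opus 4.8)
The plan is to prove the two parts by exhibiting the fundamental $k$-biquandle $BQ_f(w)$ as the free object on the edge set $E(\Gamma_w)$ subject to the coloring relations, and then reading off its universal property. First I would make precise the category in which we are working: objects are $k$-biquandles $(X,B)$ and morphisms are maps $f\colon X\to X'$ intertwining the operators, i.e.\ satisfying $f^{\times k}\circ B=B'\circ f^{\times k}$. The set $X_f(w)$ is defined as the quotient of the free monoid of formal applications of $B_f(w)$ to the symbols of $E(\Gamma_w)$ by the congruence generated by relations~(1)--(4) of Definition~\ref{def:biquandle_gnk} together with the coloring equation~(\ref{eq:coloring_rule}) imposed at every vertex of $\Gamma_w$. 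By construction $B_f(w)$ descends to a well-defined operator on $X_f(w)$ satisfying those same relations, so $(X_f(w),B_f(w))$ is indeed a $k$-biquandle.

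For part~(1), I would establish the universal property by the standard free-object argument. Given a $k$-biquandle $(X,B)$, a good coloring $c\colon E(\Gamma_w)\to X$ assigns an element of $X$ to each generating symbol. Because $c$ is good, it respects the coloring equation~(\ref{eq:coloring_rule}) at every vertex, and because $(X,B)$ satisfies relations~(1)--(4), the assignment $c$ extends uniquely to a map on all formal words and is compatible with the defining congruence; hence it factors through a unique $k$-biquandle homomorphism $\hat c\colon BQ_f(w)\to(X,B)$. Conversely, any homomorphism $\phi\colon BQ_f(w)\to(X,B)$ restricts on the images of the edge symbols to a coloring of $E(\Gamma_w)$, which is automatically good since $\phi$ intertwines the operators and the vertex relations hold in the source. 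These two assignments are mutually inverse, giving the claimed natural bijection $Col_{(X,B)}(w)\cong\operatorname{Hom}(BQ_f(w),(X,B))$; naturality in $(X,B)$ is immediate from functoriality of composition.

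For part~(2), suppose $w$ and $w'$ define the same element of $G_n^k$. By Theorem~\ref{thm:biquandle_gnk_colouring} the moves realizing the equality of the group elements are exactly the relations~(1)--(4), and each such move transforms $\Gamma_w$ into $\Gamma_{w'}$ by a local replacement of edges and vertices. The key observation is that each local move induces an isomorphism of the corresponding fundamental $k$-biquandles: the relation being applied is already imposed in the defining congruence, so the formal-word presentations of $X_f(w)$ and $X_f(w')$ differ only by a Tietze-type transformation (introducing or eliminating generators and relations that are consequences of~(1)--(4)). Composing these isomorphisms along a sequence of moves carrying $w$ to $w'$ yields an isomorphism $BQ_f(w)\cong BQ_f(w')$. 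Alternatively, part~(2) follows formally from part~(1) by Yoneda: the functors $\operatorname{Hom}(BQ_f(w),-)$ and $\operatorname{Hom}(BQ_f(w'),-)$ agree on every $k$-biquandle via the bijections with $Col$, and Theorem~\ref{thm:biquandle_gnk_colouring} identifies $Col_{(X,B)}(w)\cong Col_{(X,B)}(w')$ naturally, forcing the representing objects to be isomorphic.

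The main obstacle I anticipate is verifying that $B_f(w)$ is genuinely well-defined and invertible on the quotient $X_f(w)$, i.e.\ that imposing the coloring equation at the vertices does not collapse or over-identify the formal words in a way incompatible with relation~(2) ($B^2=\id$); one must check carefully that the congruence generated by~(1)--(4) and~(\ref{eq:coloring_rule}) is exactly the right one for $B_f(w)$ to act as a bijection. The Yoneda route sidesteps this by deriving part~(2) purely from the universal property, but the universal property itself in part~(1) still rests on the well-definedness of the fundamental object, so this verification cannot be avoided entirely.
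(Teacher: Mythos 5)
Your proposal is correct and takes essentially the same route as the paper: part~(1) is the universal property of the presented object $BQ_f(w)$, and part~(2) is obtained (in your ``Yoneda'' alternative) by combining part~(1) with the colouring-invariance theorem for diagrams representing the same element of $G_n^k$, which is exactly the paper's two-sentence argument. Your direct Tietze-move argument for part~(2) and the well-definedness caveat about $B_f(w)$ on the quotient are additional detail that the paper omits.
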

\begin{proof}
The first statement of the theorem follows from the definition of the fundamental biquandle. The second statement follows from the universal property of the fundamental biquandle (first statement of the theorem) and Theorem~\ref{thm:biquandle_gnk_colouring}.
\end{proof}

\end{example}

\begin{example}[Gaussian $k$-biquandle]
Let $X=\Z_2$ and $B(x_1,x_2,\dots,x_k)=(x_1+1,x_2+1,\dots, x_k+1)$. It is easy to see $(X,B)$ satisfies the $k$-biquandle conditions.
\end{example}

The previous example can be generalized in the following way. Let $X$ be an arbitrary set and $\tau\colon X\to X$ is an involution on $X$, i.e. $\tau^2=\id_X$. Then the map
$$B(x_1,x_2,\dots, x_k)=(\tau(x_1),\tau(x_2),\dots, \tau(x_k))$$ defines a $k$-biquandle structure on $X$ we call an {\em involution $k$-biquandle}.

We can give a further generalization.

Let $X$ be a set and $\tau\colon X\to X$ is an involution on $X$. Then $\tau$ is a product of independent transpositions $\tau=\prod_i(p_i q_i)$. For any tuple $\mathbf x =(x_1,\dots, x_k)$ we define its multiplicity vector $\mathbf m(\mathbf x)=(m_1(\mathbf x),m_2(\mathbf x),...)$ where $m_i(\mathbf x)$ is the number of elements $x_j$ in $\mathbf x$ which are equal to $p_i$ or $q_i$ for some $i$. Let
$$M_k=\{(m_1,m_2,\dots)\,|\, m_i\in\Z, m_i\ge 0,\ 1\le\sum_i m_i\le k\}$$
be the set of possible notrivial multiplicity vectors and $\mu\subset M_k$ be an arbitrary subset of it. We define a map $B_{\tau,\mu}\colon X^{\times k}\to X^{\times k}$ as following
$$
B_{\tau,\mu}(\mathbf x)=\left\{\begin{array}{cl}\tau(\mathbf x),& m(\mathbf x)\in\mu,\\
\mathbf x,& m(\mathbf x)\not\in\mu.
\end{array}\right.
$$
Here $\tau(x_1,\dots, x_k)=(\tau(x_1),\dots, \tau(x_k))$.

\begin{proposition}
The pair $(X,B_{\tau,\mu})$ is a $k$-biquandle.
\end{proposition}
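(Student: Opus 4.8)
The plan is to reduce all four biquandle axioms to a single structural observation: the multiplicity vector $\mathbf m$ is invariant both under permutations of the coordinates and under the coordinatewise action of $\tau$ on any subset of coordinates. The first invariance is clear, since $\mathbf m(\mathbf x)$ depends only on the multiset of entries of $\mathbf x$. The second is the crux: because $\tau=\prod_i(p_iq_i)$ maps each pair $\{p_i,q_i\}$ to itself and fixes every other element, replacing an entry $x_j$ by $\tau(x_j)$ does not change which pair (if any) that entry belongs to, hence leaves every component $m_i$ unchanged. I would record this as a preliminary lemma, as it is used in every step that follows.

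With the lemma in hand, axioms (1) and (2) are immediate. For (1), permutation-invariance of $\mathbf m$ shows that the condition $\mathbf m(\mathbf x)\in\mu$ is unaffected by $\pi$, while $\tau$ acts coordinatewise and so commutes with the permutation of factors; thus $B_{\tau,\mu}$ commutes with every $\pi\in\Sigma_k$. For (2), if $\mathbf m(\mathbf x)\in\mu$ then $B_{\tau,\mu}(\mathbf x)=\tau(\mathbf x)$, and since $\mathbf m(\tau(\mathbf x))=\mathbf m(\mathbf x)\in\mu$ the map fires a second time, yielding $\tau^2(\mathbf x)=\mathbf x$; if $\mathbf m(\mathbf x)\notin\mu$ the map is already the identity. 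Hence $B_{\tau,\mu}^2=\id$.

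The relations (3) and (4) I would handle uniformly by the following principle. Each operator $B_m$ occurring in these relations either applies $\tau$ simultaneously to all coordinates indexed by $m$ or acts as the identity, the choice being governed by $\mathbf m$ of the restriction of the current tuple to the coordinates in $m$. By the lemma, applying any other operator $B_{m'}$ alters the coordinates only by $\tau$ (or leaves them fixed), and $\tau$ preserves that restricted multiplicity vector; therefore the firing condition of each $B_m$ carries the same truth value as on the original input, independently of the order of composition. Consequently the set $S$ of operators that actually fire is determined by the input alone. On $S$, every operation is an application of the one fixed involution $\tau$ to a coordinate subset; such applications commute with one another and square to the identity, so the net effect on each coordinate $\ell$ is $\tau$ raised to the parity of the number of firing operators whose index set contains $\ell$ — a quantity that does not depend on the order of composition.

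It then remains only to observe that the two sides of each relation involve exactly the same collection of operators: in (3) the operators $B_{\{1,\dots,k\}}$ and $B_{\{p+1,\dots,k+p\}}$ occur on both sides, and in (4) the operators $B_{\hat1},\dots,B_{\widehat{k+1}}$ occur on both sides, merely in reversed order. Since both the firing set and the per-coordinate parities are order-independent, the two sides induce the same map, which establishes (3) and (4). The one point where care is genuinely required — and which I expect to be the main obstacle to a fully rigorous write-up — is the claim that the firing conditions decouple from the order of composition; this is precisely where the $\tau$-invariance of $\mathbf m$ is indispensable, and I would verify it explicitly, for instance by tracking how $B_{\hat j}$ acts on the coordinates entering $\mathbf m(\mathbf z_{\hat i})$ and noting that the single untouched coordinate $j$ together with the $\tau$-modified remaining coordinates all contribute unchanged multiplicities.
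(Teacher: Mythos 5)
Your proposal is correct and follows essentially the same route as the paper: the key observation in both is that the multiplicity vector is unchanged when $\tau$ is applied to any subset of coordinates, so the firing condition of each operator $(B_{\tau,\mu})_m$ is unaffected by the other operators, whence all such operators commute pairwise and the far commutativity and tetrahedron relations follow. Your write-up merely makes explicit the details (the order-independence of the firing set and the per-coordinate parity argument) that the paper leaves to the reader.
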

\begin{proof}
The first two properties of $k$-biquandle definition are evident. For the tetrahedron and far commutativity relations note that if $\mathbf x=(x_1,\dots, x_k)$ and $\mathbf x'$ is obtained from $\mathbf x$ by applying $\tau$ to some components of $\mathbf x$ then $m(\mathbf x)=m(\mathbf x')$. This means the condition $m(\mathbf x)\in\mu$ does not change after applying operators $B_{\tau,\mu}$. Therefore, for any subsets $m$ and $m'$ with $k$ elements operators $(B_{\tau,\mu})_m$ and $(B_{\tau,\mu})_{m'}$ commute. Thus, the tetrahedron and far commutativity relations hold.
\end{proof}

We call the biquandle $(X,B_{\tau,\mu})$ a {\em conditional involution $k$-biquandle}.

The direct computation shows the following.

\begin{proposition}
Let $k=3$.
\begin{enumerate}
\item Let $X=\Z_2$. Then there is a unique up to isomorphism nontrivial $3$-biquandle on $X$. This is the Gaussian biquandle.
\item Let $X=\Z_3$. Then there are 7 up to isomorphism nontrivial $3$-biquandles on $X$. They are all conditional involution biquandles.
\end{enumerate}
\end{proposition}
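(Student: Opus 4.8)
The plan is to exploit the equivariance axiom (1) together with the involutivity axiom (2) to reduce each admissible map $B$ to a finite combinatorial datum, and only then to impose the far commutativity and tetrahedron relations (3) and (4) as filters. Since $B$ commutes with the $\Sigma_3$-action on $X^{\times 3}$ and is a bijection (being an involution), it must permute the $\Sigma_3$-orbits of $X^{\times 3}$ preserving their cardinalities, and its restriction to each orbit is an equivariant bijection onto the image orbit. For $X=\Z_2$ the orbits are the two constant triples (size $1$) and the two ``two-equal-one-different'' types (size $3$); for $X=\Z_3$ they are the three constants (size $1$), the six types $\{a,a,b\}$ with $a\ne b$ (size $3$), and the single all-distinct type $\{0,1,2\}$ (size $6$). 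On a singleton or a size-$3$ orbit the equivariant bijection onto a prescribed target orbit is unique (the stabilizer is the full group, respectively a single transposition with trivial Weyl group), so there $B$ is pinned down by an involution $\beta$ of the constants and an involution $\gamma$ of the set of size-$3$ types. On the all-distinct orbit the equivariant self-bijections are exactly the coordinatewise value-permutations, so $B$ contributes an order-$\le 2$ value-permutation $\rho$ of $\Z_3$. Thus every candidate is encoded by $(\beta,\gamma)$ for $\Z_2$ and by $(\beta,\gamma,\rho)$ for $\Z_3$.

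First I would dispose of $X=\Z_2$, where there are only $2\times 2=4$ candidates $(\beta,\gamma)$: the identity (trivial biquandle), the total flip $x\mapsto x+1$ (the Gaussian biquandle, i.e. the involution biquandle whose $\tau$ is the transposition of $\Z_2$), the map flipping the mixed triples but fixing the constants, and the map flipping the constants but fixing the mixed triples. For each of the two ``mixed'' candidates I would exhibit an explicit violation of the far commutativity relation (3) on $X^{\times 5}$: the single tuple $(0,0,0,1,1)$ already separates $B_{\{1,2,3\}}B_{\{3,4,5\}}$ from $B_{\{3,4,5\}}B_{\{1,2,3\}}$ in both cases. This leaves only the identity and the Gaussian biquandle, proving part (1).

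For $X=\Z_3$ I would proceed in two stages. The counting stage is short: for a transposition $\tau$ the only achievable nontrivial multiplicity vectors are $(1),(2),(3)$, so $\mu$ ranges over the $2^3=8$ subsets of $\{(1),(2),(3)\}$; all transpositions are conjugate under a relabelling of $\Z_3$ and any relabelling preserves the multiplicity count, so the conditional involution biquandles fall into exactly $8$ isomorphism classes, distinguished by $\mu$, of which $\mu=\emptyset$ is the trivial one. This yields $7$ pairwise non-isomorphic nontrivial conditional involution biquandles. The substantive stage is the converse: every nontrivial $3$-biquandle on $\Z_3$ is one of these. Here I would feed the datum $(\beta,\gamma,\rho)$ into relations (3) and (4); far commutativity, which tightly couples the action of $B$ on two triples overlapping in a single coordinate, is the main engine, while the tetrahedron relation on $X^{\times 4}$ supplies the remaining constraints. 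The outcome one expects is that $B$ must send every triple either to itself or to its image under a single fixed involution $\tau$ of $\Z_3$, with the choice governed only by the multiplicity vector of the triple relative to $\tau$, i.e. exactly the conditional involution form; matching the survivors to the subsets $\mu\subseteq\{(1),(2),(3)\}$ then returns the count $7$.

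The main obstacle is precisely this last elimination for $\Z_3$: a naive enumeration of all $(\beta,\gamma,\rho)$ is far too large to check by hand, so the real work lies in using a small, well-chosen family of $5$- and $4$-tuples to force the coupling among $\beta$, $\gamma$ and $\rho$ (mirroring the decisive $\Z_2$ computation on $(0,0,0,1,1)$) and thereby to rule out every ``exotic'' $B$ that is not of conditional involution type. Once that coupling is established, the surviving maps are exactly the ones parametrized above, and the isomorphism count of the counting stage finishes part (2).
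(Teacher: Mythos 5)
Your orbit-theoretic reduction is sound, and it supplies more than the paper does: the paper's entire justification is the phrase ``the direct computation shows,'' so the content has to come from somewhere. The equivariance axiom does force $B$ to permute the $\Sigma_3$-orbits of $X^{\times 3}$; the equivariant bijection between two size-$1$ or size-$3$ orbits is unique (the image of a representative must be fixed by its stabilizer, a transposition, and each size-$3$ orbit contains exactly one such point); and the equivariant self-bijections of the free orbit of $(0,1,2)$ are exactly the six coordinatewise value-permutations. So the encoding by $(\beta,\gamma)$, resp.\ $(\beta,\gamma,\rho)$ with $\rho^2=\id$, is correct. Your part (1) is complete: I checked that $(0,0,0,1,1)$ does kill both mixed candidates (the ``flip non-constant triples'' map yields $(1,1,0,0,0)$ versus $(0,0,1,0,0)$ for the two orders of $B_{\{1,2,3\}}$ and $B_{\{3,4,5\}}$, and the ``flip constants only'' map yields $(1,1,1,1,1)$ versus $(1,1,0,0,0)$), and the Gaussian biquandle is verified earlier in the paper. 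The counting stage of part (2) is also fine: for a transposition $\tau$ of $\Z_3$ only the multiplicity vectors $(1),(2),(3)$ occur, the eight subsets $\mu$ give eight distinct maps of which one is trivial, and conjugation by a relabelling of $\Z_3$ preserves $\mu$, so the seven nontrivial conditional involution biquandles are pairwise non-isomorphic.

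The genuine gap is the converse half of part (2), which you explicitly defer: nothing in the proposal shows that every nontrivial $3$-biquandle on $\Z_3$ is of conditional involution type. Your reduction leaves $4\cdot 76\cdot 4=1216$ candidates $(\beta,\gamma,\rho)$ ($4$ involutions of the three constants, $76$ involutions of the six mixed types, $4$ involutive value-permutations), and the sentence ``the outcome one expects is\ldots'' is a prediction, not an argument. To close it you must actually run the filter: for instance, evaluate far commutativity on the $5$-tuples $(a,a,a,b,b)$ and $(a,a,b,c,c)$ to couple $\beta$ with $\gamma$, on tuples containing an all-distinct triple to couple $\rho$ with the rest, and then let the tetrahedron relation on $X^{\times 4}$ remove whatever survives that is not of the form $B_{\tau,\mu}$ --- or else invoke an exhaustive machine check over the $1216$ candidates. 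Until one of these is carried out, part (2) is a plan rather than a proof; to be fair, that leaves it on essentially the same footing as the paper's own one-line ``direct computation.''
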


\begin{example}
Let $X$ be a flat biquandle~\cite{K} with operations $(a,b)\mapsto (b\ast a, a\circ b)$, see Fig.~\ref{fig:flat_biquandle}.

\begin{figure}
\centering\includegraphics[width=0.12\textwidth]{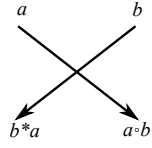}
\caption{Flat biquandle}\label{fig:flat_biquandle}
\end{figure}

Then we can define a structure of $k$-biquandle $(X,B)$ on $X$ by the splitting a $k$-crossing (see Fig.~\ref{fig:crossing_resolution}) into $\frac{k(k-1)}2$ double crossings and applying the biquandle operations to the obtained diagram. The result does not depend on the choice of the splitting.

\begin{figure}
\centering\includegraphics[width=0.4\textwidth]{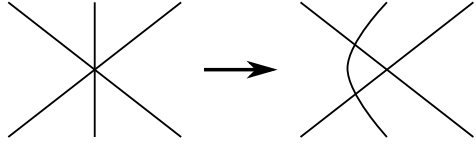}
\caption{Resolution of a $k$-crossing}\label{fig:crossing_resolution}
\end{figure}

For example, for $k=3$ we have the following formula
$$
B(x_1,x_2,x_3)=((x_1\circ x_2)\circ x_3, (x_2\ast x_1)\circ (x_3\ast(x_1\circ x_2)), (x_3\ast(x_1\circ x_2))\ast(x_2\ast x_1)).
$$

In order to the operator $B$ possesses the properties of Definition~\ref{def:biquandle_gnk} we need to impose the following relations on the flat biquandle operations:
\begin{itemize}
\item $x=(x\circ y)\ast(y\ast x)$ for all $x,y\in X$;
\item $x\ast y=x\circ y$ for all $x,y\in X$
\end{itemize}
when $k\ge 2$, and additionally
\begin{itemize}
\item $x\ast (y\circ z)=x\ast y$ for all $x,y,z\in X$;
\item $(x\circ y)\ast z = (x\circ z)\ast y$ for all $x,y,z\in X$
\end{itemize}
when $k\ge 3$.

\end{example}


\section{Marked graph diagrams and surface-links}\label{sect-mgd}

A {\it marked graph} is a graph $G$ possibly with $4$-valent vertices embedded in $\mathbb R^3$ (or $S^3$) such that it has a finite number of vertices and edges (possibly loops). Each vertex, say $v$, of $G$ is rigid, that is, there exists a small rectangular neighborhood $N(v)$ homeomorphic to $\{(x, y)|-1 \leq x, y \leq 1\},$ where $v$ corresponds to the origin and the edges incident to $v$ are represented by $x^2 = y^2$, and also has a {\it marker} represented by a thickened line segment on $N(v)$ with the core homeomorphic to $\{(x, 0)|-\frac{1}{2} \leq x \leq \frac{1}{2}\}$. A marked graph in $\mathbb R^3$ can be presented as usual by a diagram on $\mathbb R^2$, called a {\it marked graph diagram} (or {\it ch-diagram}), which is a classical link diagram on $\mathbb R^2$ possibly with marked $4$-valent vertices look like
\xy (-5,5);(5,-5) **@{-},
(5,5);(-5,-5) **@{-},
(3,-0.2);(-3,-0.2) **@{-},
(3,0);(-3,0) **@{-},
(3,0.2);(-3,0.2) **@{-},
\endxy. Throughout this paper, a classical link (diagram) is regarded as a marked graph (diagram) without marked vertices.

For a given marked graph diagram $D$, we define $L_-(D)$ and $L_+(D)$ to be the link diagrams obtained from $D$ by replacing every marked vertex \xy (-4,4);(4,-4) **@{-},
(4,4);(-4,-4) **@{-},
(3,-0.2);(-3,-0.2) **@{-},
(3,0);(-3,0) **@{-},
(3,0.2);(-3,0.2) **@{-},
\endxy with \xy (-4,4);(-4,-4) **\crv{(1,0)},
(4,4);(4,-4) **\crv{(-1,0)},
\endxy and \xy (-4,4);(4,4) **\crv{(0,-1)},
(4,-4);(-4,-4) **\crv{(0,1)},
\endxy, respectively, as illustrated in Figure~\ref{fig-nori-mg}. If $D$ has no marked vertices, then we define $L_-(D)=L_+(D)=D$. We call $L_-(D)$ and $L_+(D)$ the {\it negative resolution} and the {\it positive resolution} of $D$, respectively. A marked graph diagram $D$ is said to be {\it admissible} if the negative resolution $L_-(D)$ and the positive resolution $L_+(D)$ are all trivial link diagrams.

\begin{figure}[ht]
\begin{center}
\resizebox{0.55\textwidth}{!}{%
  \includegraphics{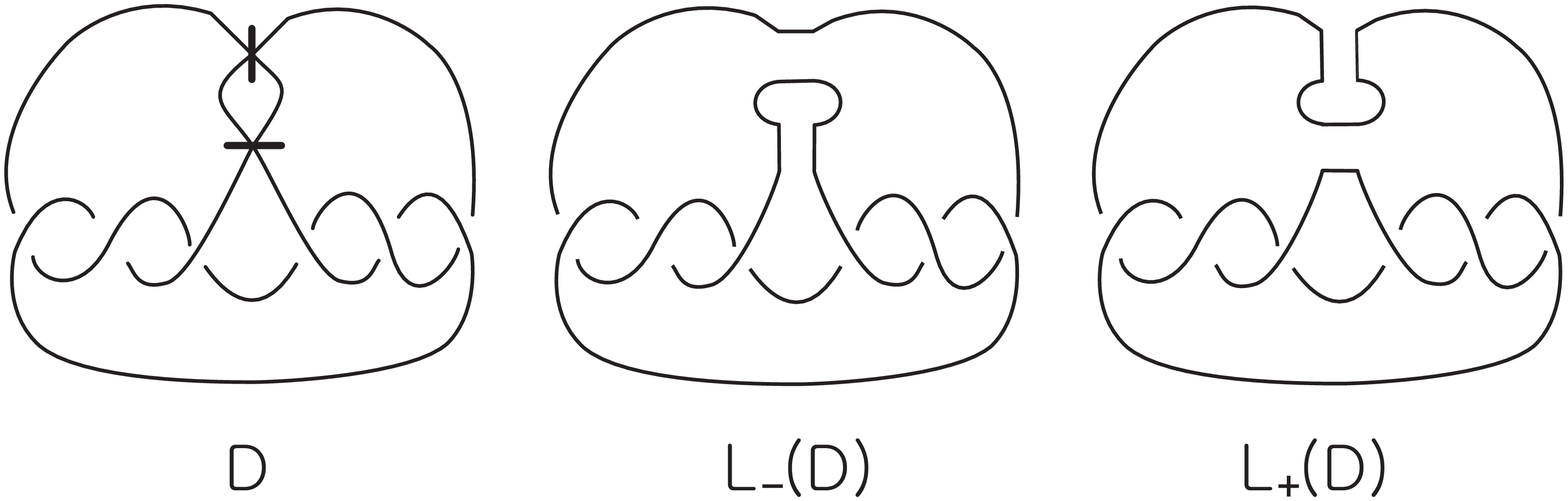}}
\caption{Resolutions of marked graph diagrams}
\label{fig-nori-mg}
\end{center}
\end{figure}

A {\it surface-link} (or {\it knotted surface}) is a closed 2-manifold smoothly (or piecewise linearly and locally flatly) embedded in the standardly oriented Euclidean $4$-dimensional space $\mathbb R^4$ (or the $4$-sphere $S^4$). A connected surface-link is called a {\it surface-knot}. A $2$-sphere-link is sometimes called a {\it $2$-link}. A connected $2$-link is also called a {\it $2$-knot}.
Two surface-links $\mathcal L$ and $\mathcal L'$ in $\mathbb R^4$ are {\it equivalent} if they are ambient isotopic, i.e., there exists an orientation preserving homeomorphism $h: \mathbb R^4 \to \mathbb R^4$ such that $h(\mathcal L)=\mathcal L'$. It is well known that every surface-link is presented by an admissible marked graph diagram and vice versa (see \cite{KSS},\cite{Lo},\cite{Yo}).

The local transformations on marked graph diagrams depicted in Figure~\ref{fig-oymoves} are called {\it Yoshikawa moves}. (These moves were first announced in \cite{Yo}.)
The Yoshikawa moves $\Omega_1, \Omega_2, \Omega_3, \Omega_4, \Omega'_4$ and $\Omega_5$ are said to be of {\it type I} and the Yoshikawa moves $\Omega_6, \Omega'_6, \Omega_7$ and $\Omega_8$ are said to be of {\it type II}. It is noted that all Yoshikawa moves of type I are realized to ambient isotopies of $\mathbb R^3$, while all Yoshikawa moves of type II are realized to ambient isotopies of $\mathbb R^4$, not $\mathbb R^3$ (see \cite{KSS}).

\begin{figure}[ht]
\begin{center}
\resizebox{0.65\textwidth}{!}{%
\includegraphics{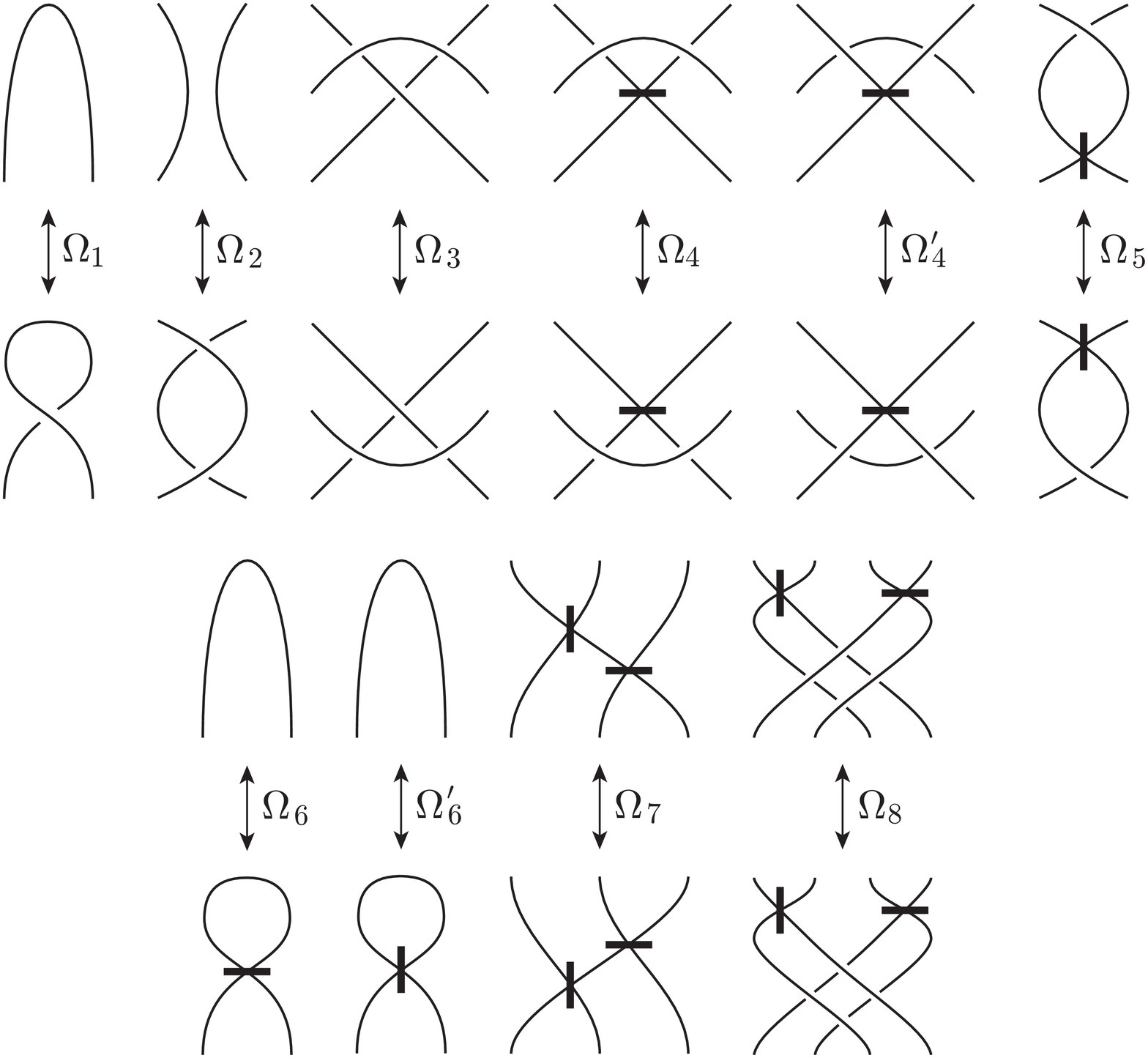} }
\caption{Yoshikawa moves}\label{fig-oymoves}
\end{center}
\end{figure}

Two marked graph diagrams $D$ and $D'$ are said to be {\it Yoshikawa move equivalent} if they can be transformed into each other by a finite sequence of Yoshikawa moves of type I, type II and ambient isotopies of $\mathbb R^2$.

\begin{theorem}[\cite{KK,KJL2,Sw,Yo}]\label{thm-equiv-mgds-ym}
Let $\mathcal L$ and $\mathcal L'$ be two surface-links and let $D$ and $D'$ be two marked graph diagrams presenting $\mathcal L$ and $\mathcal L'$, respectively. Then  $\mathcal L$ and $\mathcal L'$ are equivalent if and only if $D$ and $D'$ are Yoshikawa move equivalent.
\end{theorem}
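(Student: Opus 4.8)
The plan is to treat this as the surface-link analogue of the Reidemeister theorem, in which the role of planar knot diagrams is played by marked graph diagrams and the role of the Reidemeister moves by the Yoshikawa moves. The bridge between the two sides is a \emph{hyperbolic splitting} of the surface-link: after a small ambient isotopy we may assume that the height function $\mathbb{R}^4=\mathbb{R}^3\times\mathbb{R}\to\mathbb{R}$ restricts to a Morse function on $\mathcal{L}$ with all minima below all saddle points and all saddle points below all maxima, and with the saddles concentrated at a single intermediate level. The cross-section just below the saddle level is the negative resolution $L_-(D)$, the cross-section just above it is the positive resolution $L_+(D)$, and the saddles themselves, recorded by the markers at the $4$-valent vertices, reconstruct the diagram $D$. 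First I would make this dictionary precise, verifying that every surface-link admits such a splitting and hence an admissible marked graph diagram, and conversely that the diagram determines the surface-link up to equivalence; this is the content already asserted in the text preceding the theorem.

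For the ``if'' direction I would verify, one Yoshikawa move at a time, that changing $D$ by the move leaves the equivalence class of the presented surface-link unchanged. The type~I moves $\Omega_1,\Omega_2,\Omega_3,\Omega_4,\Omega_4',\Omega_5$ are realized by ambient isotopies of the cross-sectional $\mathbb{R}^3$ (Reidemeister-type isotopies together with local slides of the markers), so they visibly fix the surface. The type~II moves $\Omega_6,\Omega_6',\Omega_7,\Omega_8$ are the genuinely four-dimensional ones: they correspond to exchanging the heights of two adjacent critical points, to passing a strand across a saddle, and to a birth/death of a cancelling pair of critical points. For each of these I would exhibit the explicit isotopy of $\mathbb{R}^4$ that realizes it, so that Yoshikawa equivalence of $D$ and $D'$ forces equivalence of $\mathcal{L}$ and $\mathcal{L}'$.

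The ``only if'' direction is the substantive half. Given an ambient isotopy $h_s\colon\mathbb{R}^4\to\mathbb{R}^4$, $s\in[0,1]$, with $h_0=\id$ and $h_1(\mathcal{L})=\mathcal{L}'$, I would consider the one-parameter family $\mathcal{L}_s=h_s(\mathcal{L})$ together with the height function restricted to each $\mathcal{L}_s$. After a generic perturbation of the family, the height function is Morse with distinct critical values for all but finitely many $s$, and at each exceptional parameter exactly one codimension-one degeneracy occurs: either two critical values cross, or a birth/death of a cancelling pair of critical points, or a single non-Morse (birth-death type) singularity. This is a Cerf-theoretic statement about the generic stratification of the space of functions on the family. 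For each such elementary transition I would read off how the associated marked graph diagram changes and match it against the list of Yoshikawa moves, concluding that $D$ and $D'$ are connected by a finite sequence of these moves together with ambient isotopies of $\mathbb{R}^2$.

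The main obstacle I anticipate is precisely the completeness argument inside the ``only if'' direction: showing that the Cerf-theoretic bifurcation analysis produces \emph{exactly} the listed Yoshikawa moves and nothing more. One must enumerate all generic codimension-one transitions of the height function across the family, keep careful track of how the markers at the $4$-valent vertices are created, destroyed, and transported through each transition, and verify that the interactions of saddles with the minima and maxima require no additional move. Handling this marker bookkeeping uniformly, and ruling out extra relations, is the delicate point; it is precisely what the cited works \cite{KK,KJL2,Sw,Yo} establish, and where the real work of the theorem lies.
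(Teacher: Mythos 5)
The paper offers no proof of this theorem at all: it is imported verbatim from the cited references \cite{KK,KJL2,Sw,Yo}, so there is no internal argument to compare yours against. Your outline does correctly describe the route those references take --- presenting a surface-link via a hyperbolic splitting whose middle cross-section with markers is the marked graph diagram, checking move-by-move that type~I moves are cross-sectional isotopies while type~II moves are realized by isotopies of $\mathbb{R}^4$, and then establishing completeness by a Cerf-type analysis of a generic one-parameter family of height functions along an ambient isotopy.

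That said, as a proof your proposal has a genuine gap, and you name it yourself: the entire substantive content of the theorem is the completeness half, i.e.\ the enumeration of all generic codimension-one transitions of the family (critical-value crossings, birth--death pairs, and the interactions of saddles with crossings and with each other as seen in the planar projection) together with the verification that each one is a composite of the ten listed Yoshikawa moves. You describe what such an enumeration would have to accomplish but do not carry out any case of it, and you explicitly defer it to the cited works. This is an honest and accurate account of where the difficulty lies --- and it mirrors what the paper itself does by citing rather than proving --- but it means the proposal is a proof plan rather than a proof. One further caution for if you were to execute the plan: the generic bifurcation analysis must be done not for the abstract Morse function on the surface but for the projected, marked diagram in $\mathbb{R}^2$, so additional codimension-one events (tangencies and triple points of the projection interacting with markers) enter the list; this is why moves such as $\Omega_4$, $\Omega_4'$ and $\Omega_5$ appear, and omitting this layer is the easiest way for such an argument to go wrong.
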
	


\section{Surface singular braid monoid \texorpdfstring{$SSB_n$}{SSBn} and groups \texorpdfstring{$G^k_n$}{Gnk}}

A marked graph diagram is said to be in {\it braid form} if it is the geometric closure of a singular braid on $n$ strands together with markers on its singular vertices for some integer $n\geq 1$. (The notion of singular braid was developed independently in \cite{Ba} and \cite{Bi}.)

\begin{proposition}[\cite{Ja1}, Proposition 3.2]
For every surface-link, there exists its marked graph diagram in braid form.
\end{proposition}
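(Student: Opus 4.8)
The plan is to prove an Alexander-type theorem for marked graph diagrams by adapting the classical braiding algorithm of Yamada and Vogel to the marked setting. Every surface-link admits \emph{some} marked graph diagram (the presentation result cited above), so it suffices to transform an arbitrary marked graph diagram, by ambient isotopy of $\R^3$ together with type~I moves, into the geometric closure of a marked singular braid. The starting observation is that a marked graph diagram is nothing but a $4$-valent graph immersed in $\R^2$ in which every vertex is either a classical crossing (carrying over/under data) or a marked transverse double point (carrying a marker); near each vertex the two local arcs cross transversally, exactly as a singular crossing $\tau_i$ does in a singular braid. First I would orient the diagram: pairing opposite edges at every vertex (as dictated by the over/under strands at a crossing and by the rigid $x^2=y^2$ structure at a marked vertex) turns the $4$-valent graph into an immersed $1$-manifold, i.e. a disjoint union of closed curves, which I orient so that each vertex acquires two incoming and two outgoing edges.

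Second, I would fix a point $O\in\R^2$ as the braid axis and run the Seifert--Vogel procedure. Resolving every crossing \emph{and} every marked vertex according to the orientation yields a family of Seifert circles; a circle oriented clockwise about $O$ is called incoherent. Vogel's move threads an arc of the diagram across a complementary region (equivalently across the point at infinity), and the usual bookkeeping shows each such move strictly decreases the number of incoherent circles. Iterating brings the diagram to a form in which every Seifert circle runs counterclockwise about $O$, and such a diagram is precisely the geometric closure of a braid whose crossings realize the generators $\sigma_i^{\pm1}$ and whose marked vertices realize the singular generators $\tau_i$. Since Vogel's move is a composition of planar isotopies and Reidemeister-II-type slides, it is realized by type~I Yoshikawa moves and ambient isotopy of $\R^3$, so the underlying surface-link is unchanged throughout.

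The hard part will be guaranteeing that the marked vertices survive the braiding intact and end up as legitimate singular crossings. Two points require care. First, during a Vogel move one must never be forced to thread an arc \emph{through} a marked vertex in a way that alters its rigid rectangular neighborhood or its marker; this is handled by always choosing the threaded arc inside a small regular neighborhood of an edge, well away from the vertices, so that marked vertices are only translated, never resolved. Second, in the final braid each marked vertex must have both of its strands descending in braid coordinates, for otherwise it would not be a valid $\tau_i$; but this is exactly the coherence condition secured in the previous step, since forcing all Seifert circles to wind counterclockwise about $O$ makes the two arcs at every vertex run parallel in the braid direction, ruling out an anti-parallel marked vertex. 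Once both points are established, the resulting closed marked singular braid is, by construction, a marked graph diagram presenting the same surface-link, which proves the proposition.
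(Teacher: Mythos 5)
The paper does not actually prove this proposition: it is imported verbatim from \cite{Ja1} (Proposition 3.2), so there is no in-text argument to compare yours against. On its own terms, your sketch follows the standard route --- an Alexander--Yamada--Vogel braiding argument applied to an arbitrary marked graph diagram of the surface-link, which exists by the presentation theorem quoted in Section~\ref{sect-mgd} --- and this is essentially the marked-vertex analogue of the Alexander theorem for singular links in the sense of \cite{Bi}; the two points you isolate as delicate (keeping the rigid vertices and their markers intact during the threading moves, and ensuring that after braiding both strands at every marked vertex descend, so that the vertex becomes a legitimate $a_i$ or $b_i$) are indeed the right ones, and your resolution of each is sound. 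The one place I would push back is the termination bookkeeping: a Vogel move does not obviously decrease ``the number of incoherent Seifert circles'' --- it preserves the number of Seifert circles and merely reconnects two of them, and the standard monovariants are either Alexander's count of wrongly-oriented subarcs thrown over the axis one at a time, or Vogel's count of defect faces of the Seifert picture. Either of these repairs the argument without changing its structure, so I would call this an imprecision rather than a genuine gap; with that fixed, your outline is a correct proof of the proposition.
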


Let $SSB_n (n\geq 1)$ denote the set of all marked graph diagrams in braid form on $n$ strands. Then $SSB_n$ forms a monoid with some defining relations (see Definition \ref{defn-singularbraid}), which is called the {\it surface singular braid monoid on $n$ strands}. The product $xy$ of two surface singular braids $x$ and $y$ is obtained by putting them end to end as the product of two classical braids. For $n = 1$, this monoid is trivial with one element. Elements of $SSB_n$ are called {\it surface singular braids} (on $n$ strands), and are generated by four types of elementary surface singular braids $a_i, b_i, c_i,$ and $c_i^{-1}$ on $n$ strands for $i = 1,\ldots,n-1$, called the {\it standard generators} of $SSB_n$, where the correspondence of types of crossings and types of marked singular vertices between $i$-th and $(i+1)$-th strand are as shown in Fig.~\ref{fig-ssbgenerators}.

\begin{figure}
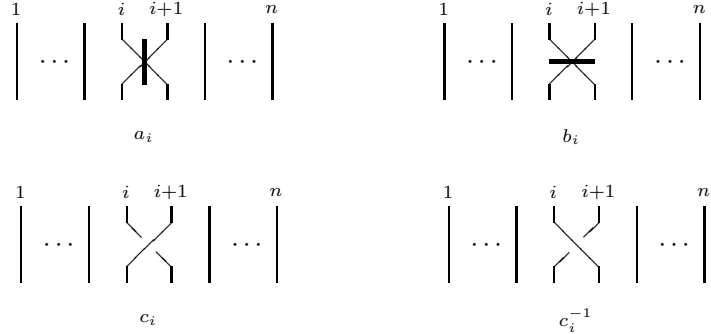

\centerline{\xy
(20,5);(20,7) **@{-},
(20,13);(20,15) **@{-},
(20,7);(26,13) **@{-},
(20,13);(26,7) **@{-},
(26,5);(26,7) **@{-},
(26,13);(26,15) **@{-},
(23,13);(23,7) **@{-},
(23.2,13);(23.2,7) **@{-},
(22.8,13);(22.8,7) **@{-},
(15,5);(15,15) **@{-}, (6,5);(6,15) **@{-},
(31,5);(31,15) **@{-}, (40,5);(40,15) **@{-},
(11,10)*{\cdots}, (36,10)*{\cdots},
(6,17)*{_{1}}, (20,17)*{_{i}}, (40,17)*{_{n}},
(26,17)*{_{i+1}}, (23,0)*{_{a_i}},
\endxy
  \qquad\qquad\qquad
\xy (20,5);(20,7) **@{-},
(20,13);(20,15) **@{-},
(20,13);(26,7) **@{-},
(26,13);(20,7) **@{-},
(26,5);(26,7) **@{-},
(26,13);(26,15) **@{-},
(20,10);(26,10) **@{-},
(20,9.8);(26,9.8) **@{-},
(20,10.2);(26,10.2) **@{-},
(15,5);(15,15) **@{-}, 
(6,5);(6,15) **@{-}, (31,5);(31,15) **@{-},
(40,5);(40,15) **@{-}, (11,10)*{\cdots}, (36,10)*{\cdots},
(6,17)*{_{1}}, (20,17)*{_{i}}, (40,17)*{_{n}},
(26,17)*{_{i+1}},(23,0)*{_{b_i}},
\endxy}
\vskip 0.5cm
\centerline{ \xy (20,5);(20,7) **@{-},
(20,13);(20,15) **@{-}, (20,7);(26,13) **@{-},
(20,13);(22,11) **@{-}, (24,9);(26,7) **@{-},
(26,5);(26,7) **@{-}, (26,13);(26,15) **@{-},
(15,5);(15,15) **@{-}, (6,5);(6,15) **@{-},
(31,5);(31,15) **@{-}, (40,5);(40,15) **@{-},
(11,10)*{\cdots}, (36,10)*{\cdots},
(6,17)*{_{1}}, (20,17)*{_{i}}, (40,17)*{_{n}},
(26,17)*{_{i+1}}, (23,0)*{_{c_i}},
\endxy
  \qquad\qquad\qquad
\xy (20,5);(20,7) **@{-}, (20,13);(20,15) **@{-},
(20,13);(26,7) **@{-}, (20,7);(22,9) **@{-},
(24,11);(26,13) **@{-}, (26,5);(26,7) **@{-},
(26,13);(26,15) **@{-},
(15,5);(15,15) **@{-}, (6,5);(6,15) **@{-},
(31,5);(31,15) **@{-}, (40,5);(40,15) **@{-}, (11,10)*{\cdots}, (36,10)*{\cdots}, (6,17)*{_{1}}, (20,17)*{_{i}}, (40,17)*{_{n}}, (26,17)*{_{i+1}}, (23,0)*{_{c_i^{-1}}},
\endxy}
{}\vspace*{5pt}\caption{The standard generators of $SSB_n$}
\label{fig-ssbgenerators}
\end{figure}

\begin{definition}(\cite{Ja1})\label{defn-singularbraid}
 Let $n \in \mathbb Z, n > 1$, and $i, j, k \in \{1,\ldots,n-1\}$ such that $|k-i| = 1$, moreover let $x_i, y_i \in \{a_i, b_i, c_i, c_i^{-1}\}.$ We define $SSB_n$ to be the monoid generated by $a_i, b_i, c_i,$ and $ c_i^{-1} (1\leq i \leq n-1)$ subjected to the following defining relations:
\begin{itemize}
\item[(A1)] $c_ic_i^{-1}=1=c_i^{-1}c_i$,
\item[(A2)] $x_iy_j = y_jx_i$ for $|i-j|>1$,
\item[(A3)] $x_ic_kc_i = c_kc_ix_k,$
\item[(A4)] $x_ic_k^{-1}c_i^{-1} = c_k^{-1}c_i^{-1}x_k,$
\item[(A5)] $a_ib_k = b_ka_i,$
\item[(A6)] $a_ib_{i-2}(c_{i-1}c_{i-2}c_ic_{i-1})^2 = a_ib_{i-2}$ for $i > 2,$
\item[(A7)] $b_ia_{i-2}(c_{i-1}c_{i-2}c_ic_{i-1})^2 = b_ia_{i-2}$ for $i > 2,$
\item[(A8)] $a^2_i= a_i,$
\item[(A9)] $b^2_i= b_i,$
\item[(A10)] $a_ib_ic^2_i= a_ib_i,$
\item[(A11)] $a_ib_k(c_ic_kc_i)^2 = a_ib_k.$
\end{itemize}
\end{definition}

For $x \in SSB_n$, we denote $\widehat{x}$ the closure of the surface singular braid $x$ obtained by connecting the $n$ initial points to the corresponding end-points by a collection of $n$ parallel strands as same as the closure of classical braids. Note that the closed surface singular braid $\widehat{x}$ is a marked graph diagram as discussed in the previous section \ref{sect-mgd}. Let $CSB_n$ be the subset of $SSB_n$ consisting of only those elements $x$ such that the closure $\widehat{x}$ is an admissible marked graph diagram, i.e., both the positive resolution $L_+(\widehat{x})$ and the negative resolution $L_-(\widehat{x})$ of $\widehat{x}$ are diagrams of trivial classical links. We consider the following Markov type relations on surface singular braids in $CSB_n$:

\begin{itemize}
\item[(C1)] $x_iS = Sx_i$ for $x_i \in \{a_i, b_i, c_i, c_i^{-1}\} (1\leq i\leq n-1)$ and $S \in CSB_n,$
\item[(C2)] $S = Sx_n$ for $S \in CSB_n$ and $x_n \in \{a_n, b_n, c_n, c_n^{-1}\} \subset CSB_{n+1}.$
\end{itemize}

\begin{proposition}(\cite{Ja1})\label{prop-markov}
Making change in a closed surface singular braid word formulation of a
surface-link by using one of the relations from (A1)-(A11) or (C1)-(C2), we receive a formula of a surface-link of the same type.
\end{proposition}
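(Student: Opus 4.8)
The plan is to reduce everything to Theorem~\ref{thm-equiv-mgds-ym}. Each of the listed relations has two sides, both of which produce closed marked graph diagrams, so it suffices to verify that for every relation the two resulting closed diagrams are Yoshikawa-move equivalent; the theorem then guarantees that the two closures present the same surface-link. I would therefore proceed relation by relation, exhibiting for each a finite sequence of Yoshikawa moves (of type I and type II) together with planar isotopies of $\mathbb R^2$ and the sliding of generators around the braid closure.

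First I would dispatch the relations whose content is purely classical or planar. The far-commutativity relation (A2) and the conjugation relation (C1) are realized by planar isotopy of the closed diagram: disjoint generators slide past one another, and conjugation slides a generator once around the closure. The crossing relations (A1), (A3), (A4) involve only ordinary crossings and the passage of a generator through adjacent crossings, so each is realized by a composition of the type I moves $\Omega_2,\Omega_3$ (and, when a marked vertex is slid past a crossing, $\Omega_4,\Omega_4',\Omega_5$); all of these are of type I and hence preserve the surface-link already at the level of $\mathbb R^3$.

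Next I would treat the relations that genuinely involve the marked (singular) vertices. The commutation (A5) and the idempotency relations (A8) $a_i^2=a_i$, (A9) $b_i^2=b_i$ reflect the local picture of a saddle: two saddles of the same type stacked along a strand are isotopic to one, and adjacent saddles of different type may be interchanged; these should be captured by $\Omega_5,\Omega_6,\Omega_6'$. The remaining relations (A6), (A7), (A10), (A11) carry the genuinely four-dimensional content, since their left-hand sides contain a full twist --- $c_i^2$ in (A10), $(c_ic_kc_i)^2$ in (A11), and $(c_{i-1}c_{i-2}c_ic_{i-1})^2$ in (A6),(A7) --- multiplied against marked vertices. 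Geometrically a full twist performed next to a saddle can be absorbed, which is exactly what the type II moves $\Omega_7,\Omega_8$ (isotopies of $\mathbb R^4$, not $\mathbb R^3$) accomplish. For each such relation I would draw the closed diagram, apply the twist-absorbing move at the marked vertex, and read off the asserted equality. Finally, the stabilization relation (C2) $S=Sx_n$ is the Markov stabilization for surface singular braids: adjoining a new strand carrying one generator. Because $S\in CSB_n$ has admissible closure, the added strand closes to a split piece compatible with both resolutions, so the closed diagram changes only by an $\Omega_1$ (when $x_n=c_n^{\pm1}$) or by an admissibility-preserving type II move (when $x_n=a_n$ or $b_n$), again preserving the surface-link type.

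The hard part will be the full-twist relations (A6), (A7), (A10), (A11). Unwinding $(c_{i-1}c_{i-2}c_ic_{i-1})^2$ or $(c_ic_kc_i)^2$ into an explicit sequence of type II moves requires simultaneously tracking every over/under decoration and the placement of each marker through moves that are only isotopies in $\mathbb R^4$ and act non-locally; one must also check that every intermediate diagram remains a legal marked graph diagram with controlled resolutions. I would organize this by first proving a single lemma --- that a full twist adjacent to a marked vertex is absorbed by a composite of $\Omega_7$ and $\Omega_8$ --- and then deriving (A6)--(A11) uniformly from that lemma, which should contain the bulk of the diagrammatic bookkeeping.
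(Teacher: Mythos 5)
Your proposal follows exactly the route the paper takes: the paper proves this proposition with the single remark that it is a straightforward consequence of Theorem~\ref{thm-equiv-mgds-ym}, i.e.\ one checks that the closed diagrams on the two sides of each relation (A1)--(A11), (C1)--(C2) are Yoshikawa move equivalent, deferring all diagrammatic details to \cite{Ja1}. Your relation-by-relation breakdown and the proposed twist-absorption lemma simply fill in the bookkeeping that the paper omits, so the two arguments are essentially the same.
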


We remark that Proposition \ref{prop-markov} is a straightforward consequence of Theorem \ref{thm-equiv-mgds-ym}. The following question is still open.

\vskip 0.3cm

\noindent{\bf Question 1.} Whether any pair of marked graph diagrams in braid form of equivalent surface-link can be transformed one another by using only relations (A1)-(A11), (C1) and (C2)?

\vskip 0.3cm

The following proposition gives another useful presentation for the surface singular braid monoid $SSB_n$.

\begin{proposition} (\cite{Ja2}) \label{prop-pres-ssbm}
The monoid $SSB_n (n \geq 2)$ is generated by $a_i, b_i, c_i, c_i^{-1} (1\leq i\leq n-1)$ subjected to the following relations:
\begin{itemize}
\item[(R1)] $c_ic^{-1}_i = 1 = c_i^{-1}c_i,$
\item[(R2)] $x_iy_j = y_jx_i$ for $|i-j| > 1,$ where $x_i, y_i \in \{a_i, b_i, c_i, c_i^{-1}\}$,
\item[(R3)] $a_ic_i = c_ia_i,$
\item[(R4)] $b_ic_i = c_ib_i,$
\item[(R5)] $c_{i+1}c_ic_{i+1} = c_ic_{i+1}c_i$ for $1\leq i < n-1,$
\item[(R6)] $a_{i+1}c_ic_{i+1} = c_ic_{i+1}a_i$ for $1\leq i < n-1,$
\item[(R7)] $b_{i+1}c_ic_{i+1} = c_ic_{i+1}b_i$ for $1\leq i < n-1,$
\item[(R8)] $a_ic_{i+1}c_i = c_{i+1}c_ia_{i+1}$ for $1\leq i < n-1,$
\item[(R9)] $b_ic_{i+1}c_i = c_{i+1}c_ib_{i+1}$ for $1\leq i < n-1,$
\item[(R10)] $a_ib_{i+1} = b_{i+1}a_i$ for $1\leq i < n-1,$
\item[(R11)] $a_ib_i = b_ia_i,$
\item[(R12)] $a_i^2 = a_i,$
\item[(R13)] $b_i^2 = b_i,$
\item[(R14)] $a_ib_ic_i^2 = a_ib_i,$
\item[(R15)] $a_ib_{i+1}(c_ic_{i+1}c_i)^2 = a_ib_{i+1}$ for $1\leq i < n-1,$
\item[(R16)] $a_ib_{i+2}(c_{i+1}c_ic_{i+2}c_{i+1})^2 = a_ib_{i+2}$ for $1\leq i < n-2.$
\end{itemize}
\end{proposition}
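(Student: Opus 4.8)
The plan is to prove that the two presentations define the same monoid. Since both use the same generating set $\{a_i,b_i,c_i,c_i^{-1}\mid 1\le i\le n-1\}$, it suffices to show that the relation families (A1)--(A11) and (R1)--(R16) generate the same congruence on the free monoid on these generators, i.e. to exhibit a sequence of Tietze transformations between them. Concretely, I would verify that every relation (R1)--(R16) is a consequence of (A1)--(A11), and conversely that every relation (A1)--(A11) is a consequence of (R1)--(R16). Several relations match verbatim and cost nothing: (A1) is (R1), (A2) is (R2), (A8) is (R12), (A9) is (R13), and (A10) is (R14), so these can be set aside immediately.

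Next I would dispatch the braid-type and far-distance relations, which are mere specializations. Reading (A3) with the repeated index equal to $i$ and $k=i+1$ gives, for $x=c$, the braid relation (R5); for $x=a$ it gives (R8), and taking instead the repeated index $i+1$ with $k=i$ gives (R6); for $x=b$ one gets (R9) and (R7) in the same way. The inverse-crossing relation (A4) follows from (A3) using (A1), so the $x=c^{-1}$ instances need no separate treatment. The relation (R10) is (A5) with $k=i+1$, and (R15) is (A11) with $k=i+1$; the distance-two relation (R16) is (A7) (equivalently (A6)) after reindexing $i\mapsto i+2$ and one application of far commutativity (A2) to interchange $a_i$ and $b_{i+2}$. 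In the opposite direction, the full index range of (A3), (A4), (A5), (A11), together with (A6) and (A7), is recovered from the $(i+1)$-indexed relations (R5)--(R10), (R15), (R16) by reindexing, using (R1) for the inverse forms, (R5) to pass between $(c_ic_{i+1}c_i)^2$ and $(c_{i+1}c_ic_{i+1})^2$, and (R2) for the distance-two cases.

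What remains, and this is where the real work lies, are the same-site relations (R3) $a_ic_i=c_ia_i$, (R4) $b_ic_i=c_ib_i$, and (R11) $a_ib_i=b_ia_i$, none of which is a specialization of any (A)-relation, together with the dual ``wrong-neighbour'' instances on the (A)-side, such as (A11) with $k=i-1$ and the $k=i-1$ halves of (A3)/(A5) for $x\in\{a,b\}$, which are not specializations of any (R)-relation. To produce, say, (R3) from the (A)-relations I would conjugate a singular generator across a strand: (A3) yields the two identities $a_{i+1}=c_ic_{i+1}\,a_i\,c_{i+1}^{-1}c_i^{-1}$ and $a_{i+1}=c_i^{-1}c_{i+1}^{-1}\,a_i\,c_{i+1}c_i$, and I would feed the resulting word identity for $a_i$ into the idempotent and tangency relations (A8)--(A11) to force $a_i$ and $c_i$ to commute; (R4) is analogous, and (R11) would combine (A10) with (R3) and (R4). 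I expect precisely these derivations to be the main obstacle: unlike everything else they are not formal specializations but require genuine word manipulation, and it is quite possible that the cleanest justification is the geometric one, namely that each such relation is realized by a Yoshikawa move and hence holds in $SSB_n$ by Theorem~\ref{thm-equiv-mgds-ym} (this is exactly how Proposition~\ref{prop-markov} reads off the validity of relations from isotopies).
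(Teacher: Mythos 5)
The paper offers no argument for this proposition at all: it is quoted verbatim from~\cite{Ja2}, so there is no ``paper's proof'' to match your approach against. Judged on its own terms, your reduction of the bookkeeping is mostly sound --- the identifications (A1)=(R1), (A2)=(R2), (A8)--(A10)=(R12)--(R14), the specializations of (A3)/(A4) to (R5)--(R9), of (A5) to (R10) and of (A11) to (R15) are all correct, and (A4) does follow from (A3) and (A1). (One small slip: far commutativity turns the reindexed (A7) into (R16), but (A6) is \emph{not} ``equivalently'' the same relation --- it involves the element $a_{i+2}b_i$ rather than $a_ib_{i+2}$, and recovering it from the (R)-list needs the conjugation relations (R6)--(R9), not just (R2).)

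The genuine gap is exactly where you suspect it, and it is not closable by the word manipulation you propose: the relations (R3), (R4), (R11) are \emph{not} consequences of (A1)--(A11) as transcribed in Definition~\ref{defn-singularbraid}. Concretely, map the (A)-presented monoid to the full transformation monoid $T_n$ on $\{1,\dots,n\}$ by sending $c_i^{\pm 1}$ to the transposition $(i\ i{+}1)$, each $b_i$ to the identity, $a_1$ to the constant map onto $1$, and each $a_{i+1}$ to the conjugate of $a_i$ by $(i\ i{+}1)(i{+}1\ i{+}2)$ forced by (A3). Every relation (A1)--(A11) holds (all the $c$-words appearing in (A10), (A11), (A6), (A7) become the identity permutation), yet $a_1c_1$ and $c_1a_1$ are distinct constant maps, so $a_ic_i=c_ia_i$ fails in this quotient and hence cannot be a formal consequence of (A1)--(A11). (For $n=2$ the failure is even starker, since only (A1), (A8)--(A10) are nonvacuous.) Consequently your plan of ``feeding the two expressions for $a_{i+1}$ into (A8)--(A11)'' can only ever produce commutation of $a_i$ with words such as $c_{i+1}c_i^2c_{i+1}$, never with $c_i$ itself. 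Your fallback --- justifying (R3), (R4), (R11) geometrically via isotopy/Yoshikawa moves as in Proposition~\ref{prop-markov} --- is the right instinct, but it changes what is being proved: it shows these relations hold in the diagrammatically defined monoid, not that the two \emph{presentations} coincide, and it then still requires knowing that (A1)--(A11) (or (R1)--(R16)) is a complete relation set for that geometric monoid. That completeness is precisely the content of~\cite{Ja1,Ja2} and cannot be extracted from the data available in this paper; any honest proof here must either import it or redo Jablonowski's normal-form/Tietze analysis starting from the diagrammatic definition.
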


In the rest of the paper, we shall investigate some relationship between the surface singular braid monoid $SSB_n$ and the groups $G_{m}^{k}$ of free $k$-braids, and also $k$-biquandles for $G_m^k$ for some $m$ and $k$?. It is natural to ask the following questions.

\vskip 0.3cm

\noindent{\bf Question 2.} Whether or not there is a homomorphism from $SSB_n$ to $G_{m}^{k}$ for some $m$ and $k$?

\vskip 0.3cm

We give a positive answer to this question in the next section.




\section{Virtual surface singular braid monoid \texorpdfstring{$VSSB_n$}{VSSBn}}

In this section we extend the surface singular braid monoid by adding a new type of crossing --- a virtual crossing (see Fig.~\ref{fig:virtual_crossing}). Then we define a homomorphism from the constructed virtual surface singular braid monoid to the group $G^2_n$ and describe the coloring invariants virtual surface singular braids induced by this homomorphism.

\begin{figure}
\centering\includegraphics[width=0.2\textwidth]{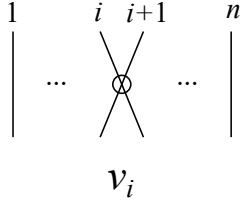}
\caption{Virtual crossing}\label{fig:virtual_crossing}
\end{figure}

\begin{definition}
Let $n\ge 2$ be an integer. The {\em virtual surface singular braid monoid} $VSSB_n$ is the monoid with the generators $a_i, b_i, c_i, c_i^{-1}$ and $v_i$, $1\leq i \leq n-1$, and the relations (A1)-(A11) from Definition~\ref{defn-singularbraid} and the virtual relations:
\begin{itemize}
\item[(V1)] $v_ix_j = x_jv_i$ for $|i-j|>1$ and $x_j\in\{a_j, b_j, c_j, c_j^{-1}, v_j\}$,
\item[(V2)] $v_i^2=1$,
\item[(V3)] $x_iv_{i\pm1}v_i = v_{i\pm1}v_ix_{i\pm1}$, $x_j\in\{a_j, b_j, c_j, c_j^{-1}, v_j\}, j=i,i\pm1$
\item[(V4)] $x_iv_i=v_ix_i,$ $x_i\in\{a_i,b_i\}$.
\end{itemize}
\end{definition}

\begin{remark}
Yoshikawa moves for virtual surface-links were found by L. Kauffman~\cite{K1}. Here we adapt those moves to the braid case. The relations (V2),(V3) correspond to the detour move of Kauffman, and the relation (V4) is the commutation relation between virtual crossings and marks.
\end{remark}

Let $\Sigma_n$ be the group of permutations of the set $\{1,\dots,n\}$. There is a homomorphism $\rho_n$ from the monoid $VSSB_n$ to the group $\Sigma_n$:
$$
\rho_n(a_i)=\rho_n(b_i)=1,\quad \rho_n(c_i)=\rho_n(c^{-1}_i)=\rho_n(v_i)=(i\ i+1),\quad i=1,\dots,n-1.
$$
Here $(i\ i+1)$ is the transposition of elements $i$ and $i+1$.

The kernel of the homomorphism $\rho_n$ is a submonoid $VPSSB_n$ in $VSSB_n$. We call it the {\em virtual pure surface singular braid monoid}.

Now, let us define an action of the monoid $VSSB_n$ on the product $G_n^2\times\Sigma_n$. Given an element $(g,\sigma)\in G_n^2\times\Sigma_n$, we define
\begin{gather*}
c_i\cdot(g,\sigma)= c_i^{-1}\cdot(g,\sigma)=(a_{\sigma(i),\sigma(i+1)}g,\sigma\cdot(i\ i+1)),\\
a_i\cdot(g,\sigma)=b_i\cdot(g,\sigma)=(g,\sigma),\quad v_i\cdot(g,\sigma)=(g,\sigma\cdot(i\ i+1)).
\end{gather*}

Then for any $\beta\in VSSB_n$ we have
\begin{equation}\label{eq:VSSB_Gn2_map}
\beta\cdot (1,1)=(\phi_n(\beta),\rho_n(\beta)).
\end{equation}

The direct computations show
\begin{proposition}
1. The map $\phi_n$ defines is a well-defines mapping of the monoid $VSSB_n$ to the set $G_n^2$.

2. The restriction of $\phi_n$ to the  virtual pure surface singular braid monoid $VPSSB_n$ is a homomorphism to the group $G_n^2$.
\end{proposition}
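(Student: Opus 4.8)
The plan is to derive both assertions from a single fact: the formulas given for $c_i,c_i^{-1},a_i,b_i,v_i$ define operators on the set $G_n^2\times\Sigma_n$ that respect every defining relation of $VSSB_n$, hence assemble into a well-defined left monoid action. Once this is known, $\phi_n(\beta)$ is the first coordinate of $\beta\cdot(1,1)$ and is therefore independent of the word chosen to represent $\beta$, which is precisely the well-definedness asserted in Part~1. So the whole of Part~1 reduces to checking, for each defining relation $w=w'$, that the associated operators agree on an arbitrary $(g,\sigma)$. Writing $s_i=(i\ i+1)$, note that on the second coordinate every generator acts by right multiplication by $s_i$ (for $c_i,c_i^{-1},v_i$) or trivially (for $a_i,b_i$), so the permutation part of each relation reduces at once to the corresponding identity among transpositions in $\Sigma_n$; all the content is in the first coordinate.

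The relations then split by difficulty. The disjoint-strand relations (A2), (A5) and (V1) hold because the accumulated letters have the form $a_{\{p,q\}}$ and $a_{\{p',q'\}}$ with $\{p,q\}\cap\{p',q'\}=\varnothing$ and hence commute by far commutativity (relation~(2) of $G_n^2$); the involutory relations (A1), (A6)--(A11) and (V2) hold because, after tracking the permutation, the accumulated factors form a palindrome $u\,v\,w\cdots w\,v\,u$ that collapses by repeated use of $a_{ij}^2=1$ (relation~(1)). The crucial case is the braid relation hidden in (A3) (and (A4)), namely $c_ic_{i+1}c_i=c_{i+1}c_ic_{i+1}$. Writing $p=\sigma(i),\,q=\sigma(i+1),\,r=\sigma(i+2)$, a direct computation gives
\[ (c_ic_{i+1}c_i)\cdot(g,\sigma)=\big(a_{\{q,r\}}a_{\{p,r\}}a_{\{p,q\}}\,g,\ \sigma s_is_{i+1}s_i\big), \]
\[ (c_{i+1}c_ic_{i+1})\cdot(g,\sigma)=\big(a_{\{p,q\}}a_{\{p,r\}}a_{\{q,r\}}\,g,\ \sigma s_{i+1}s_is_{i+1}\big), \]
and the two first coordinates coincide exactly by the tetrahedron relation $a_{pq}a_{pr}a_{qr}=a_{qr}a_{pr}a_{pq}$ (relation~(3) of $G_n^2$), while the permutations agree by the braid relation in $\Sigma_n$. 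The remaining subcases of (A3), (A4) (outer letter $a$ or $b$) and the detour relations (V3), (V4) are checked the same way, with $v_{i\pm1}v_i$ relabelling the index pattern of a crossing so that $x_i$ on one side matches $x_{i\pm1}$ on the other.

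For Part~2 the key point is the following: for every $\beta\in VSSB_n$ and every $g\in G_n^2$ one has $\beta\cdot(g,1)=\big(\phi_n(\beta)\,g,\ \rho_n(\beta)\big)$. Indeed, applying the operators of a word $\beta=x_1\cdots x_m$ successively, the first coordinate is left-multiplied at each step by a factor $a_{\{\tau(i),\tau(i+1)\}}$ (or by $1$), where $\tau$ is the permutation reached so far; this factor depends only on the permutation trajectory, never on the group element being carried. Starting from $\sigma=1$ the trajectory is determined by $\beta$ alone, so the total left factor is a fixed element of $G_n^2$, and setting $g=1$ identifies it with $\phi_n(\beta)$; the claim follows by induction on $m$. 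Granting it, for $\beta_1,\beta_2\in VPSSB_n$ we have $\rho_n(\beta_2)=1$, so, writing $\mathrm{pr}_1$ for the projection onto the $G_n^2$ factor,
\[ \phi_n(\beta_1\beta_2)=\mathrm{pr}_1\big(\beta_1\cdot(\beta_2\cdot(1,1))\big)=\mathrm{pr}_1\big(\beta_1\cdot(\phi_n(\beta_2),1)\big)=\phi_n(\beta_1)\phi_n(\beta_2), \]
whence $\phi_n|_{VPSSB_n}$ is a homomorphism into $G_n^2$; for a non-pure $\beta_2$ the starting permutation $\rho_n(\beta_2)\neq 1$ would relabel every crossing of $\beta_1$, which is exactly why the restriction to $VPSSB_n$ is forced.

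The main obstacle is the verification of the braid-type and detour relations: one must arrange the strand indices appearing in the singular-braid relations so that they fall into the index pattern demanded by the tetrahedron and far-commutativity relations of $G_n^2$. Everything else is bookkeeping of the permutation coordinate together with repeated cancellation via $a_{ij}^2=1$; the genuine input from $G_n^2$ enters only through relation~(3), so the proposition can be read as saying that the defining relations of $VSSB_n$ are matched term by term by those of $G_n^2$ under $\phi_n$.
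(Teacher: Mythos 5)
Your proof is correct and follows the only route available: the paper itself offers no argument beyond the phrase ``the direct computations show,'' and your write-up supplies exactly those computations --- checking that the stated operators respect each defining relation of $VSSB_n$ (with the braid relation $c_ic_{i+1}c_i=c_{i+1}c_ic_{i+1}$ matching the tetrahedron relation of $G_n^2$ after conjugating indices by $\sigma$), and deducing multiplicativity on $VPSSB_n$ from the observation that the accumulated left factor depends only on the permutation trajectory, which starts at the identity precisely when the second braid is pure. The one small imprecision is in your treatment of (A6)--(A7): the word in the letters $a_{ij}$ accumulated from $(c_{i-1}c_{i-2}c_ic_{i-1})^2$ is not literally a palindrome, and one application of far commutativity (to commute the letters contributed by $c_{i-2}$ and $c_i$, which involve disjoint pairs of strands) is needed before the cancellations via $a_{ij}^2=1$ go through.
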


\begin{corollary}
Let $(X,B)$ be a $2$-biquandle and $\chi_1,\chi_2\in X^{\times n}$. Then the colouring binding number $col_{(X,B)}^{\chi_1,\chi_2}(\phi_n(\beta))$, $\beta\in VSSB_n$, is an invariant of virtual surface singular braids.
\end{corollary}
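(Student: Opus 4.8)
The plan is to exhibit the quantity $col_{(X,B)}^{\chi_1,\chi_2}(\phi_n(\beta))$ as a composition of two maps each of which has already been shown to be well defined, so that invariance follows formally. First I would argue that $col_{(X,B)}^{\chi_1,\chi_2}$ descends from words in the generators $a_{ij}$ to a genuine $\{0,1\}$-valued function on the group $G_n^2$. By Theorem~\ref{thm:biquandle_gnk_colouring}, if two diagrams $w,w'$ represent the same element of $G_n^2$ there is a bijection between their sets of good colorings; the point to verify is that this bijection can be chosen to fix the boundary. This holds because each defining relation (1)--(3) of $G_n^2$ is realized by a move localized in the interior of the braid graph $\Gamma_w$, leaving the $n$ incoming and $n$ outgoing boundary edges untouched. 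Hence a good coloring with incoming data $\chi_1$ and outgoing data $\chi_2$ is carried to a good coloring with the same boundary data, giving $col_{(X,B)}^{\chi_1,\chi_2}(w)=col_{(X,B)}^{\chi_1,\chi_2}(w')$; this is exactly the remark following Theorem~\ref{thm:biquandle_gnk_colouring}.

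Second, the preceding Proposition supplies the other half: $\phi_n\colon VSSB_n\to G_n^2$ is a well-defined map on the monoid, its value being read off from $\beta\cdot(1,1)=(\phi_n(\beta),\rho_n(\beta))$ in~\eqref{eq:VSSB_Gn2_map} and shown there to be compatible with all of the relations (A1)--(A11) and (V1)--(V4). Thus $\phi_n(\beta)\in G_n^2$ depends only on the class of $\beta$ in $VSSB_n$. Note that the permutation component $\rho_n(\beta)$ has already been absorbed into the generators composing $\phi_n(\beta)$, so the subsequent coloring computation sees only the group element $\phi_n(\beta)$ together with the fixed boundary data $\chi_1,\chi_2\in X^{\times n}$, with no further permutation bookkeeping required.

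Composing the two steps, for every $2$-biquandle $(X,B)$ and every pair $\chi_1,\chi_2\in X^{\times n}$ the assignment $\beta\mapsto col_{(X,B)}^{\chi_1,\chi_2}(\phi_n(\beta))$ is well defined on $VSSB_n$, which is precisely the assertion that it is an invariant of virtual surface singular braids.

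Given the earlier results, the corollary is essentially formal, so there is no serious obstacle in the literal composition. The one genuinely substantive ingredient --- and the step I would treat most carefully --- is the boundary-preservation used in the first paragraph, since it is what upgrades the mere bijection of coloring sets in Theorem~\ref{thm:biquandle_gnk_colouring} to an equality of the binding numbers $col_{(X,B)}^{\chi_1,\chi_2}$, and hence to a well-defined function on $G_n^2$. Everything else is bookkeeping.
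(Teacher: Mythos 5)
Your proof is correct and follows essentially the same route the paper intends: the corollary is obtained by composing the well-definedness of $\phi_n$ on $VSSB_n$ (the preceding Proposition) with the remark after Theorem~\ref{thm:biquandle_gnk_colouring} that the coloring binding numbers are invariants of free $k$-braids. Your explicit verification that the bijection of colorings fixes the boundary edges is a worthwhile detail the paper leaves implicit, but it does not change the argument.
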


\begin{example}
Consider the virtual surface singular braid $\beta=c_1a_2v_3c_2^{-1}b_1c_2^{-1}$, see Fig.~\ref{fig:vssb}. Then
$\phi_4(\beta)=a_{12}a_{23}^2=a_{12}$. Take the Gaussian $2$-biquandle $X=\Z_2$, $B(x_1,x_2)=(x_1+1,x_2+1)$, and choose the colours $4$-tuples $\chi_1=(0,1,0,1)$ and $\chi_2=(0,0,1,1)$. Then $col_{(X,B)}^{\chi_1,\chi_2}(\phi_4(\beta))=1$.

\begin{figure}
\centering\includegraphics[width=0.18\textwidth]{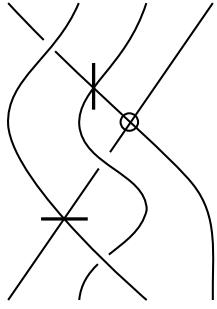}
\caption{A virtual surface singular braid}\label{fig:vssb}
\end{figure}
\end{example}

Although we constructed a homomorphism from surface singular braids to $G^2_n$, it is trivial in the sense that $\phi_n(\beta)=1$ if $\beta$ is a classical braid, i.e. has no virtual crossings. Thus, the following question remains actual.

\vskip 0.3cm

\noindent{\bf Question 2'.} How to construct a nontrivial homomorphism from $SSB_n$ to $G_{m}^{k}$ for some $m$ and $k\ge 3$?

\section*{Acknowledgements}
The first author was supported under the framework of international cooperation program managed by the National Research Foundation of Korea (NRF-2019K2A9A1A06100201). The second and the third authors were supported by the Russian Foundation for Basic Research (19-51-51004-NIF-a).


 \end{document}